\newtheorem{theor}{Theorem}
\newtheorem{lm}[theor]{Lemma}
\begin{document}

\title{Routh's Theorem for Tetrahedra}
\author{Semyon Litvinov}
\address{Penn State Hazleton, 76 University Drive, Hazleton, PA 18202, USA}
\email{snl2@psu.edu}
\author{Franti\v sek Marko}
\address{Penn State Hazleton, 76 University Drive, Hazleton, PA 18202, USA}
\email{fxm13@psu.edu}

\begin{abstract}
We give a geometric proof of the Routh's theorem for tetrahedra. 
\end{abstract}
\keywords{}

\maketitle
\section*{Introduction}

The purpose of this article is to give a geometrical generalization of the following classical Routh's theorem, see p.82 of \cite{r}.

\begin{theor}
Let $ABC$ be an arbitrary triangle of area $1$, a point $K$ lie on the line segment $BC$, 
a point $L$ lie on the line segment $AC$ and a point $M$ lie on the line segment $AB$ 
such that the ratio $\frac{|AM|}{|MB|}=x$, $\frac{|BK|}{|KC|}=y$ and $\frac{|CL|}{|LA|}=z$. 
Denote by $P$ the point of intersection of lines $AK$ and $CM$, by $Q$ the point of intersection 
of lines $BL$ and $AK$, and by $R$ the point of intersection of lines $CM$ and $BL$ - see Figure \ref{fig1}.
\begin{figure}[ht]\centering
\includegraphics[height=1.5in]{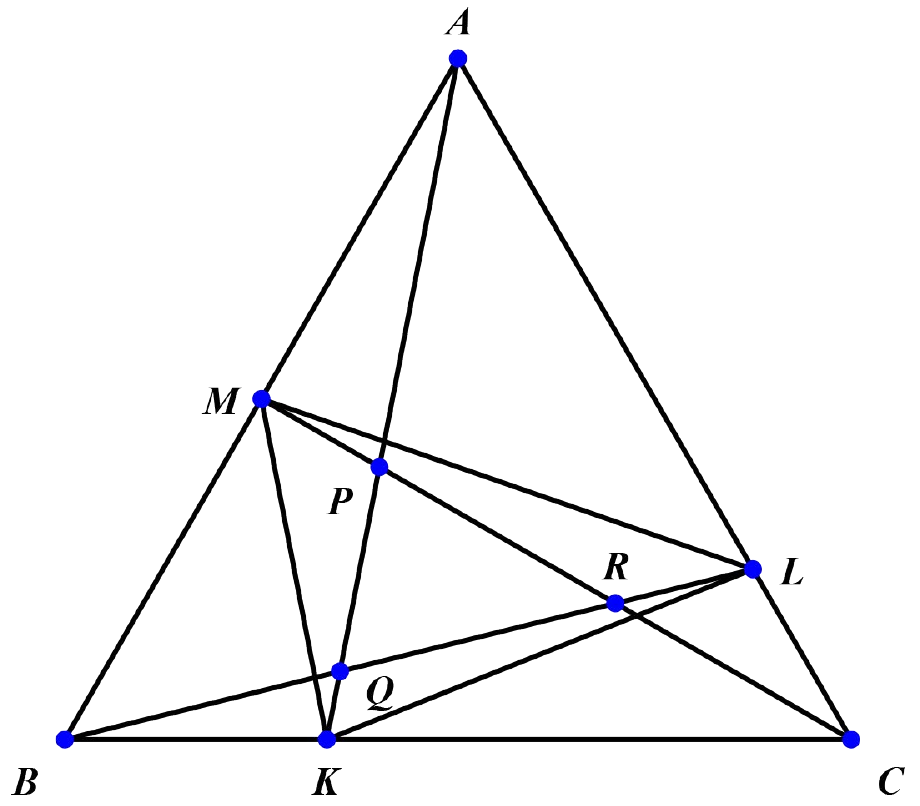}
\caption{Triangle}
\label{fig1}
\end{figure}

Then the area of the triangle $KLM$ is \[\frac{1+xyz}{(1+x)(1+y)(1+z)}\] and the area of the triangle 
$PQR$ is \[\frac{(1-xyz)^2}{(1+x+xy)(1+y+yz)(1+z+zx)}.\]
\end{theor}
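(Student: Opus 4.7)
The plan is to handle the two area formulas in turn; both are affine statements, so I can use either classical area-ratio arguments together with Menelaus' theorem or, equivalently, barycentric coordinates with respect to $ABC$.

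For $[KLM]$, I would use the complement
\[
[KLM] = [ABC] - [AML] - [BKM] - [CLK]
\]
and compute each corner triangle from the elementary ratio rule: a triangle sharing a vertex with $ABC$ and cutting off sub-segments of the two adjacent sides has area proportional to the product of the two ratios. For instance, since $AM/AB = x/(1+x)$ and $AL/AC = 1/(1+z)$, one gets $[AML] = x/[(1+x)(1+z)]$, with cyclic analogues for the other two corners under $A \to B \to C \to A$, $x \to y \to z \to x$. Combining over the common denominator $(1+x)(1+y)(1+z)$ and expanding $(1+x)(1+y)(1+z) - x(1+y) - y(1+z) - z(1+x)$, every mixed monomial cancels and only $1 + xyz$ survives.

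For $[PQR]$, the cleanest route is barycentric coordinates. In these the cevian feet are $M=(1:x:0)$, $K=(0:1:y)$, $L=(z:0:1)$, so the three cevians $AK$, $BL$, $CM$ are cut out by the simple linear equations $\gamma = y\beta$, $\alpha = z\gamma$, $\beta = x\alpha$ in coordinates $(\alpha:\beta:\gamma)$; solving them pairwise yields
\[
P = (1:x:xy), \quad Q = (yz:1:y), \quad R = (z:zx:1).
\]
The normalizing sums $1+x+xy$, $1+y+yz$, $1+z+zx$ of these three coordinate triples are exactly the three denominators appearing in the statement. Applying the standard determinant formula $[PQR]/[ABC] = \det(P,Q,R)$ for normalized barycentric coordinates then reduces the problem to evaluating the $3\times 3$ determinant of the unnormalized matrix, which expands to $1 - 2xyz + (xyz)^2 = (1-xyz)^2$.

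The only real obstacle is bookkeeping: one must track the cyclic symmetry consistently across both halves. A purely synthetic alternative for the second part uses Menelaus' theorem on triangles $ACK$ and $ABK$ (with transversals $BL$ and $CM$) to locate $Q$ and $P$ along the cevian $AK$, and then reassembles $[PQR]$ from these ratios together with their cyclic analogues. This variant is equally rigorous, but requires attention to the ordering of $A, P, Q, K$ along $AK$, which reverses as $xyz$ crosses $1$; I would present the barycentric calculation for its uniformity and for the fact that the appearance of $(1-xyz)^2$ as a perfect square emerges transparently from the determinant.
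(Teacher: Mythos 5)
Your proposal is correct: the corner-triangle cancellation down to $1+xyz$ and the determinant $1-2xyz+(xyz)^2=(1-xyz)^2$ both check out, and the barycentric coordinates $P=(1:x:xy)$, $Q=(yz:1:y)$, $R=(z:zx:1)$ with normalizing sums $1+x+xy$, $1+y+yz$, $1+z+zx$ are exactly the denominators in the statement. Be aware, though, that the paper offers no proof of this planar theorem at all --- it is quoted from Routh \cite{r} purely as motivation --- so the only meaningful comparison is with the paper's proof of the tetrahedral analogue. Your treatment of $[KLM]$ is precisely the planar shadow of the paper's proof of (\ref{e-2}): split the figure into the inner simplex plus corner pieces and evaluate each corner by the product-of-ratios rule (the role of Lemmas \ref{lm1} and \ref{l15} there). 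For $[PQR]$ you diverge: the paper stays synthetic in three dimensions, locating the inner vertices along cevian segments via Lemma \ref{lm3} (the ratio $|AP|/|PK|=v(1+u)$, which is exactly the Menelaus-type computation you sketch as your alternative) and then summing a seven-piece decomposition whose final simplification is ``verified directly or with the help of computer.'' Your barycentric determinant trades that bookkeeping for a single $3\times 3$ computation and makes the perfect square $(1-xyz)^2$ visible at a glance --- a real advantage in the plane; the analogous coordinate method one dimension up is essentially what the vector-analytic references \cite{k,yq} do, and avoiding it is the paper's stated purpose. If you were to present the synthetic variant instead, the caveat you flag about the order of $A,P,Q,K$ along the cevian as $xyz$ crosses $1$ is genuine and must be addressed; the paper handles the corresponding issue for $xyzt$ by an orientation-reversing relabeling of the vertices.
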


As a consequence of the Routh's theorem we have the following Ceva's theorem.
\begin{theor}
The lines $AK$, $BL$ and $CM$ intersect at one point if and only if $xyz=1$.
\end{theor}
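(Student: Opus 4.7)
The plan is to deduce Ceva's theorem directly from the area formula for triangle $PQR$ given in Routh's theorem, by observing that concurrence of the three cevians is equivalent to the degeneracy of $PQR$.

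First I would unpack the geometric meaning of \enquote{area of $PQR$ equals zero}. By construction, $P$ and $Q$ both lie on line $AK$, $P$ and $R$ both lie on line $CM$, and $Q$ and $R$ both lie on line $BL$. If the three cevians meet at a common point, that point must be $P = Q = R$, so the area of $PQR$ is zero. Conversely, if $PQR$ has area zero the three points are collinear; but $P, Q \in AK$ forces the line through them to coincide with $AK$ (assuming $P \neq Q$), so $R$ would lie on $AK$, and since $R \in BL \cap CM$ the three cevians would share $R$. The remaining case $P = Q$ already puts $R$'s defining lines through the common point. Either way, concurrence of $AK, BL, CM$ is equivalent to the vanishing of the area of $PQR$.

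Next I would invoke Routh's theorem, which states that the area of $PQR$ equals
\[
\frac{(1 - xyz)^2}{(1 + x + xy)(1 + y + yz)(1 + z + zx)}.
\]
Since $x, y, z > 0$, each factor of the denominator is strictly positive, so the fraction vanishes precisely when the numerator $(1 - xyz)^2$ vanishes, i.e., when $xyz = 1$. Combining with the geometric equivalence above yields the theorem.

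The main obstacle is really only the careful case analysis in the first step: one needs to rule out the possibility that $P, Q, R$ are collinear but distinct without the three cevians being concurrent. The observation that any two of these points already determine one of the cevian lines handles this cleanly, so the argument reduces to reading off the vanishing of a positive-denominator fraction.
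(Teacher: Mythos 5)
Your proposal is correct and matches the paper's (implicit) derivation: the paper states Ceva's theorem precisely as a consequence of the Routh area formula for $PQR$, whose denominator is positive for $x,y,z>0$, so the area vanishes exactly when $xyz=1$. Your careful equivalence between concurrence of $AK$, $BL$, $CM$ and the degeneracy of $PQR$ (using that any two of $P,Q,R$ lie on a common cevian) is exactly the step the paper leaves to the reader, and it is handled correctly.
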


Our goal is to establish a three-dimensional analogue of Routh's theorem. Let us start with an arbitrary tetrahedron $ABCD$ of volume $1$. 
Choose a point $M$ on the line segment $AB$ and cut the tetrahedron $ABCD$ by the plane $CDM$ as displayed on Figure \ref{fig2}.
\begin{figure}[ht]\centering
\includegraphics[height=2in]{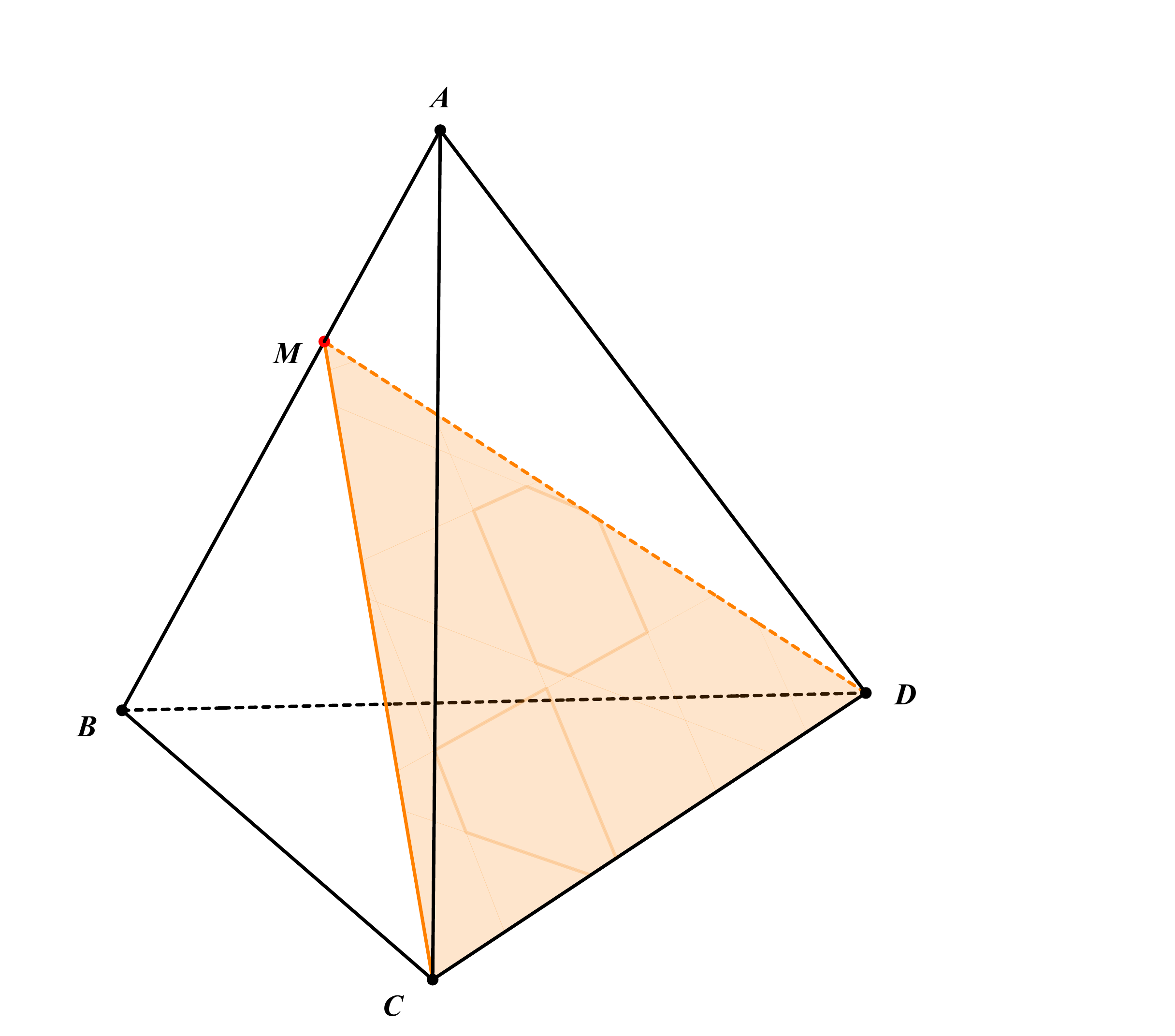}
\caption{Single cut}
\label{fig2}
\end{figure}

Next we construct three additional cutting planes: the plane containing the edge $AB$ and a point $K$ on the edge $CD$,
the plane containing the edge $BC$ and a point $L$ on the edge $DA$, and the plane 
containing  the edge $AD$ and a point $N$ on the edge $BC$. This way the points $M,N,K,L$ lie on the edges of the cycle $(ABCD)$.
All four cutting planes and their intersections are depicted on Figure \ref{fig3}.
\begin{figure}[ht]\centering
\includegraphics[height=2in]{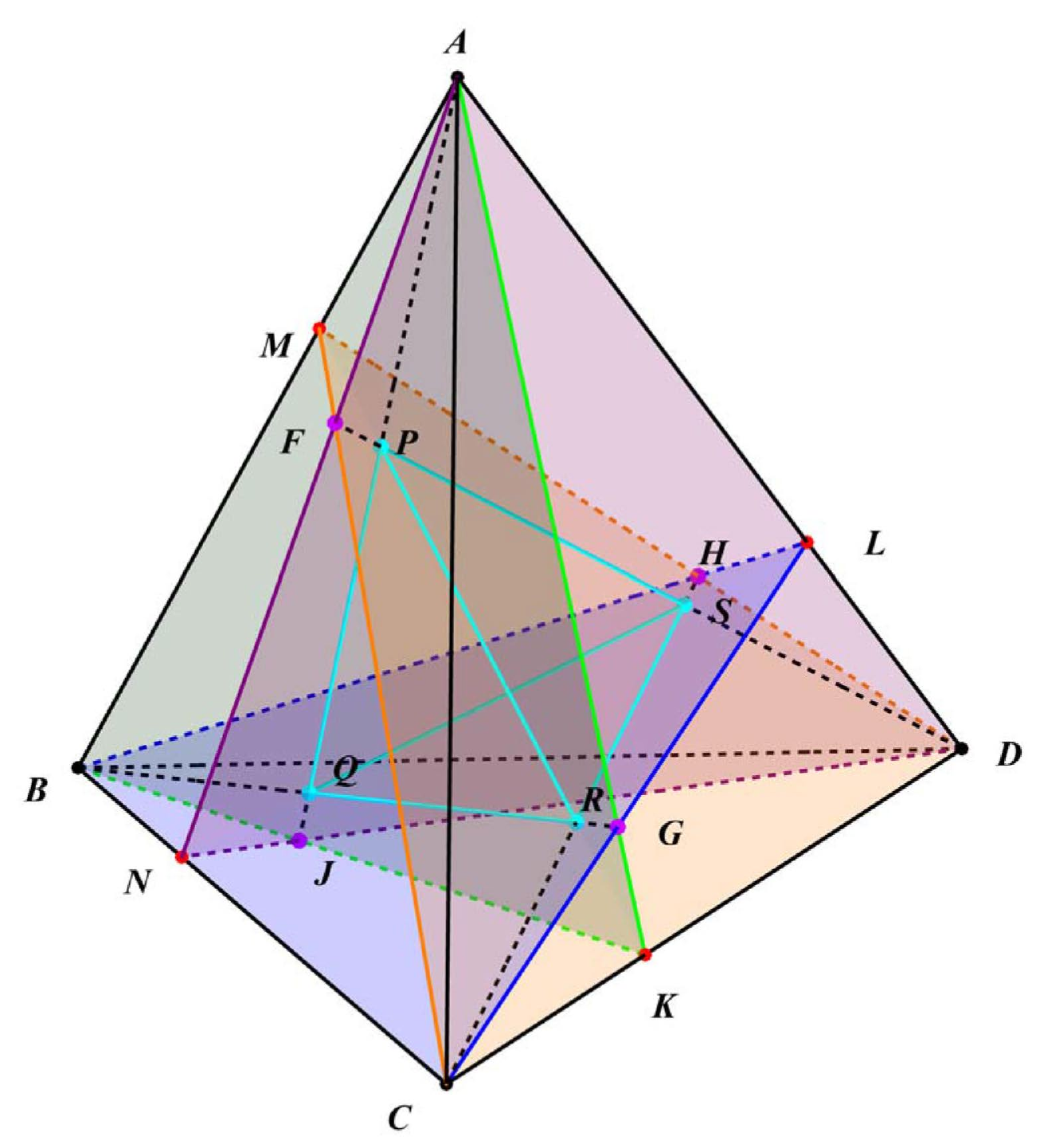}
\caption{Four cutting planes and their intersections }
\label{fig3}
\end{figure}

We will write $V_{ABCD}$ for the volume of a tetrahedron $ABCD$ and use analogous notation for volumes of other tetrahedra. 
Referring to Figure \ref{fig3}, here is our main result, Routh's theorem for tetrahedra:

\begin{theor}\label{tetra}
Let $ABCD$ be an arbitrary tetrahedron of volume $1$. Choose a point $M$ on the edge $AB$, a point $N$ on the edge $BC$, 
a point $K$ on the edge $CD$, and a point $L$ on the edge $DA$ such that $\frac{|CK|}{|KD|}=x$, $\frac{|DL|}{|LA|}=y$, 
$\frac{|AM|}{|MB|}=z$, and $\frac{|BN|}{|NC|}=t$. 
Then
\begin{equation}\label{e-2}
V_{KLMN}=\frac{|1-xyzt|}{(1+x)(1+y)(1+z)(1+t)}.
\end{equation}

The four planes given by the points $A, B, K$, points $B,C,L$, 
points $C,D,M$, and points $D,A,N$ enclose a tetrahedron $PQRS$ of the volume

\begin{equation}\label{e-1}
\begin{aligned}
&V_{PQRS} \\
&=\frac{|1-xyzt|^3}{(1+x+xy+xyz)(1+y+yz+yzt)(1+z+zt+ztx)(1+t+tx+txy)}.
\end{aligned}
\end{equation}
\end{theor}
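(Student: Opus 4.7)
\emph{Approach.} My plan is to work in barycentric coordinates with respect to $ABCD$ and to use the standard identity that for any four points $P_1,\dots,P_4$ with barycentric coordinate matrix $M=(a_{ij})$ (whose rows sum to $1$), the ratio $V_{P_1P_2P_3P_4}/V_{ABCD}$ equals $|\det M|$. Both formulas in Theorem~\ref{tetra} then reduce to evaluating a single $4\times 4$ determinant, and the essential geometric input is the translation of the four cutting planes into very simple linear equations on barycentric coordinates.

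\emph{Volume of $KLMN$.} The ratio conditions immediately give, for instance, $M=\tfrac{1}{1+z}A+\tfrac{z}{1+z}B$, and cyclic analogues for $N$, $K$, $L$. Assembling the $4\times 4$ coordinate matrix and pulling out the factor $\frac{1}{(1+x)(1+y)(1+z)(1+t)}$ leaves a matrix with exactly two nonzero entries per row, arranged in a cyclic pattern. Cofactor expansion along the first column yields the inner determinant $1-xyzt$, proving \eqref{e-2}.

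\emph{Plane equations and vertices of $PQRS$.} The key observation for the second formula is that each cutting plane, being the affine hull of three vertices with known barycentric coordinates, is cut out by a single very simple linear equation. A direct check gives $d=xc$ for $(A,B,K)$, $a=yd$ for $(B,C,L)$, $b=za$ for $(C,D,M)$, and $c=tb$ for $(D,A,N)$. Each vertex of $PQRS$ is the joint solution of three of these four equations (the omitted one varying cyclically) together with $a+b+c+d=1$. Each such system is triangular in an obvious parameter and produces one of the four coordinate vectors $(xy,xyz,1,x)$, $(y,yz,yzt,1)$, $(1,z,zt,ztx)$, $(txy,1,t,tx)$, whose coordinate sums are exactly the four denominators appearing in \eqref{e-1}.

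\emph{Final determinant and main obstacle.} To finish I would evaluate the determinant of the $4\times 4$ matrix formed by the four unnormalized coordinate vectors above. The row operations $R_1\mapsto R_1-xR_2$, $R_2\mapsto R_2-yR_3$, $R_3\mapsto R_3-zR_4$ each collapse their row to a single nonzero entry equal to $1-xyzt$; expanding against the remaining row (whose contribution comes via the even permutation $(3,4,1,2)$) produces $(1-xyzt)^3$, and combining with the four normalizing denominators yields \eqref{e-1}. The main obstacle I expect is not the determinant itself but the bookkeeping: matching the labels $P,Q,R,S$ consistently with Figure~\ref{fig3}, and handling the sign so that the absolute values in \eqref{e-2} and \eqref{e-1} cover the cases $xyzt<1$ and $xyzt>1$ uniformly. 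A sanity check at $xyzt=1$, where the tetrahedral Ceva property forces the four planes to be concurrent and therefore $V_{PQRS}=0$, confirms the structure.
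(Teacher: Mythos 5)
Your proposal is correct, and every step checks out: the barycentric coordinates of $K,L,M,N$, the cyclic two-nonzero-entries-per-row determinant evaluating to $1-xyzt$, the plane equations $d=xc$, $a=yd$, $b=za$, $c=tb$, the four unnormalized vertex vectors whose coordinate sums are exactly the denominators of \eqref{e-1}, and the row reduction giving $(1-xyzt)^3$ with the even permutation $(13)(24)$ supplying a plus sign. However, your route is genuinely different from the paper's. The paper deliberately avoids coordinates: for \eqref{e-2} it decomposes $ABCD$ into seven sub-tetrahedra ($KLMN$ plus six others) and computes each complementary piece by iterating the elementary ratio lemma $V_{AMCD}=V_{ABCD}\tfrac{|AM|}{|AB|}$; for \eqref{e-1} it uses an analogous seven-piece decomposition together with a cevian-ratio lemma in triangles to locate $P,Q,R,S$ on the cutting segments, and then verifies an algebraic identity to collapse the resulting sum. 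The paper's method buys a synthetic, picture-driven proof (its stated goal), but at the cost of justifying the decomposition identities from figures and of a separate relabeling argument for the case $xyzt>1$; your determinant method handles both signs of $1-xyzt$ uniformly via the absolute value, yields the vertices of $PQRS$ and the four denominators with almost no computation, and extends immediately to higher-dimensional simplices. The only points you should make explicit in a full write-up are the standard fact that the volume ratio equals the absolute determinant of the normalized barycentric matrix, and the (generic) observation that the four triple-intersection points you computed --- all of which have positive coordinates, hence lie inside $ABCD$ --- are indeed the vertices of the tetrahedron enclosed by the four planes.
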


\vskip 5pt

It is clear that Theorem \ref{tetra} implies the following Menelaus' theorem for tetrahedra.

\begin{theor}
The points $K,L,M,N$ in Theorem \ref{tetra} are coplanar if and only if $xyzt=1$.
\end{theor}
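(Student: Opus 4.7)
The plan is to extract this equivalence directly from the volume formula \eqref{e-2} established in Theorem \ref{tetra}. The geometric observation driving the argument is that four points in three-dimensional space are coplanar if and only if the tetrahedron they span is degenerate, i.e., has zero volume. Thus I would begin by rephrasing the coplanarity of $K, L, M, N$ as the single condition $V_{KLMN}=0$.

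Next I would analyze the right-hand side of \eqref{e-2}. Since $K$, $L$, $M$, $N$ are chosen on the respective edges, the ratios $x, y, z, t$ are positive real numbers, so the denominator $(1+x)(1+y)(1+z)(1+t)$ is strictly positive and in particular nonzero. Consequently the expression
\[
\frac{|1-xyzt|}{(1+x)(1+y)(1+z)(1+t)}
\]
vanishes precisely when its numerator vanishes, i.e., when $|1-xyzt|=0$, which in turn is equivalent to $xyzt=1$. Combining this with the reformulation of the previous paragraph yields the biconditional claimed in the statement.

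I do not foresee a serious obstacle here, since the computational content is absorbed into Theorem \ref{tetra}; the only subtlety worth flagging is the implicit hypothesis that the points $K,L,M,N$ lie in the interior of their edges, which guarantees the nonvanishing of the denominator and licenses the step that reduces $V_{KLMN}=0$ to $|1-xyzt|=0$. In this sense the proof is a one-line corollary of the main theorem, directly paralleling the way Ceva's theorem is deduced from the planar Routh's theorem in the introduction.
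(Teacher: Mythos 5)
Your proposal is correct and is exactly the argument the paper intends: it presents this Menelaus-type statement as an immediate corollary of Theorem \ref{tetra}, since $V_{KLMN}=0$ characterizes coplanarity and the denominator $(1+x)(1+y)(1+z)(1+t)$ is strictly positive for interior points. No further commentary is needed.
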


As another consequence of Theorem \ref{tetra} we obtain the analogue of Ceva's theorem.
\begin{theor}
The four planes given by points $A, B, K$, points $B,C,L$, points $C,D,M$ and points $D,A,N$ 
as in Theorem \ref{tetra} intersect at one point if and only if $xyzt=1$.
\end{theor}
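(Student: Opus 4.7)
The plan is to deduce the Ceva-type statement from the volume formula (\ref{e-1}) of Theorem \ref{tetra}, completed by a short barycentric calculation. Two steps are needed.

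The \emph{only if} direction is essentially automatic. If the four planes share a common point $X$, then each vertex of the enclosed tetrahedron $PQRS$, being the intersection of three of those planes, must coincide with $X$; hence $V_{PQRS}=0$. Since the denominator in (\ref{e-1}) is strictly positive for $x,y,z,t>0$, the numerator $|1-xyzt|^3$ must vanish, giving $xyzt=1$.

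For the \emph{if} direction I would switch to barycentric coordinates $(a:b:c:d)$ with respect to $A,B,C,D$. The ratio conditions give $K=(0:0:1:x)$, $L=(y:0:0:1)$, $M=(1:z:0:0)$, $N=(0:1:t:0)$, and a short computation shows that the planes through $A,B,K$; through $B,C,L$; through $C,D,M$; and through $D,A,N$ are cut out by
\[
d=xc,\qquad a=yd,\qquad b=za,\qquad c=tb,
\]
respectively. Chaining the four substitutions yields the single compatibility condition $a=xyzt\cdot a$. Thus the homogeneous system has a nontrivial simultaneous solution if and only if $xyzt=1$, in which case the four planes all pass through the point $(1:z:tz:xtz)$.

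The principal, and rather minor, obstacle is logical, and lies in the \emph{if} direction: the vanishing of $V_{PQRS}$ only forces the four vertices $P,Q,R,S$ to be coplanar, not coincident, so concurrency of the four planes cannot be read off directly from (\ref{e-1}). The barycentric calculation closes this gap by exhibiting the common point explicitly.
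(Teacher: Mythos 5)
Your proof is correct, but it follows a genuinely different route from the paper, which states this Ceva analogue purely as a consequence of Theorem \ref{tetra} and offers no separate argument; the intended reading is ``the planes concur iff $V_{PQRS}=0$ iff $xyzt=1$,'' using the paper's earlier remark that $P,R$ lie on the line $KM$ and $Q,S$ on the line $LN$. You correctly isolate the weak link in that reading: $V_{PQRS}=0$ by itself only yields coplanarity of $P,Q,R,S$, so the \emph{if} direction does not follow from formula (\ref{e-1}) without further geometric work (one would still have to argue that a degenerate tetrahedron cut out by four planes in this configuration forces a common point). Your barycentric computation closes that gap cleanly, and I checked the details: with $K=(0{:}0{:}1{:}x)$, $L=(y{:}0{:}0{:}1)$, $M=(1{:}z{:}0{:}0)$, $N=(0{:}1{:}t{:}0)$ the four planes are indeed $d=xc$, $a=yd$, $b=za$, $c=tb$, the chained system forces $a=xyzt\,a$, and for $xyzt=1$ the common point $(1{:}z{:}tz{:}xtz)$ has positive coordinates, hence is a genuine interior point. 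Two observations. First, your barycentric argument is actually self-contained: a nontrivial solution exists iff $xyzt=1$ (since $a=0$ forces $b=c=d=0$), so it proves \emph{both} directions and the appeal to (\ref{e-1}) in the \emph{only if} direction is redundant --- though harmless, and it keeps the statement tied to Theorem \ref{tetra} as the paper intends. Second, the trade-off: the paper's framework is deliberately synthetic (volume decompositions, Lemma \ref{lm3}), whereas your coordinate computation is closer in spirit to the vector-analytic proofs of \cite{k,yq} that the paper contrasts itself with; what you buy is rigor and an explicit common point, at the cost of stepping outside the paper's purely geometric toolkit.
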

 
The statement of Routh's theorem first appeared in the book \cite{r}, p.82.
Different proofs of Routh's theorem for triangles were given in papers \cite{a, bc, kl2, kv, n5}. 
It is rather suprising that, after an extensive search of the literature, we did not find any geometric 
proof generalizing this classical theorem from plane geometry that would cover tetrahedra, the three-dimensional analogues of triangles. 
The only references we have found are the papers \cite{k,yq}, where formulas generalizing 
Routh's theorem to higher dimensions are obtained by means of the vector analysis. 
The paper \cite{k} seems to be the first work showing formulas for the volumes of both tetrahedra in Theorem \ref{tetra}. 
The notation of \cite{k} follows that of the paper \cite{bn} and an earlier work \cite{n1},
and the reader needs to adjust it to the notation used in the present article. 
It is rather unfortunate that paper \cite{k} is written in Slovak and paper 
\cite{yq} in Chinese language which makes them inaccessible for most readers.
In fact, we did not know about these papers before we obtained main results of this article.

We intent to give a more thorough review of the literature related to generalizations of theorems of Routh, 
Ceva, and Menelaus in our forthcoming paper that starts with the geometry of tetrahedra and leads 
to an algebraic formula related to cycles of length $n$ that is interesting on its own even without the relationship to the Routh's theorem.

It was already observed in \cite{w4} that if the points $K,L,M,N$ are coplanar, then $xyzt=1$. This is the only case when $V_{KLMN}=0$. 
By examining Figure \ref{fig3}, one can notice that the vertices $P$ and $R$ of the tetrahedron $PQRS$ lie on the line $KM$
while the vertices $S$ and $Q$ lie on the line $LN$. Since this implies that $V_{KLMN}=0$ if and only if $V_{PQRS}=0$, we also conclude that $V_{PQRS}=0$ only when
$xyzt=1$. Note also that when we fix, for example, $y,z$ and $t$ and let $x$ increase starting at $0$, the product $xyzt$ also
increases starting at $0$, so the value $xyzt=1$ occurs when the point $K$ moves from $C$ to $D$. 
Thus, before this occurrence, the points $R$ and $C$ belong to the same halfspace determined by the plane passing through the points $P, Q$ and $S$.
This observation suggests that Figure \ref{fig3} corresponds to the case where $xyzt<1$.
In what follows we assume that, in Theorem \ref{tetra}, $xyzt<1$ and derive formulas for the volumes of tetrahedra $KLMN$ and $PQRS$. 
After that is done, we extend the formulas to the case $xyzt>1$ by changing the orientation 
and applying a suitable substitution of the parameters $x,y,z,t$. 

\section{Proof of (\ref{e-2})}
Assume first that $xyzt<1$ and refer to Figure \ref{fig4}.

\begin{figure}[ht]\centering
\includegraphics[height=3.5in]{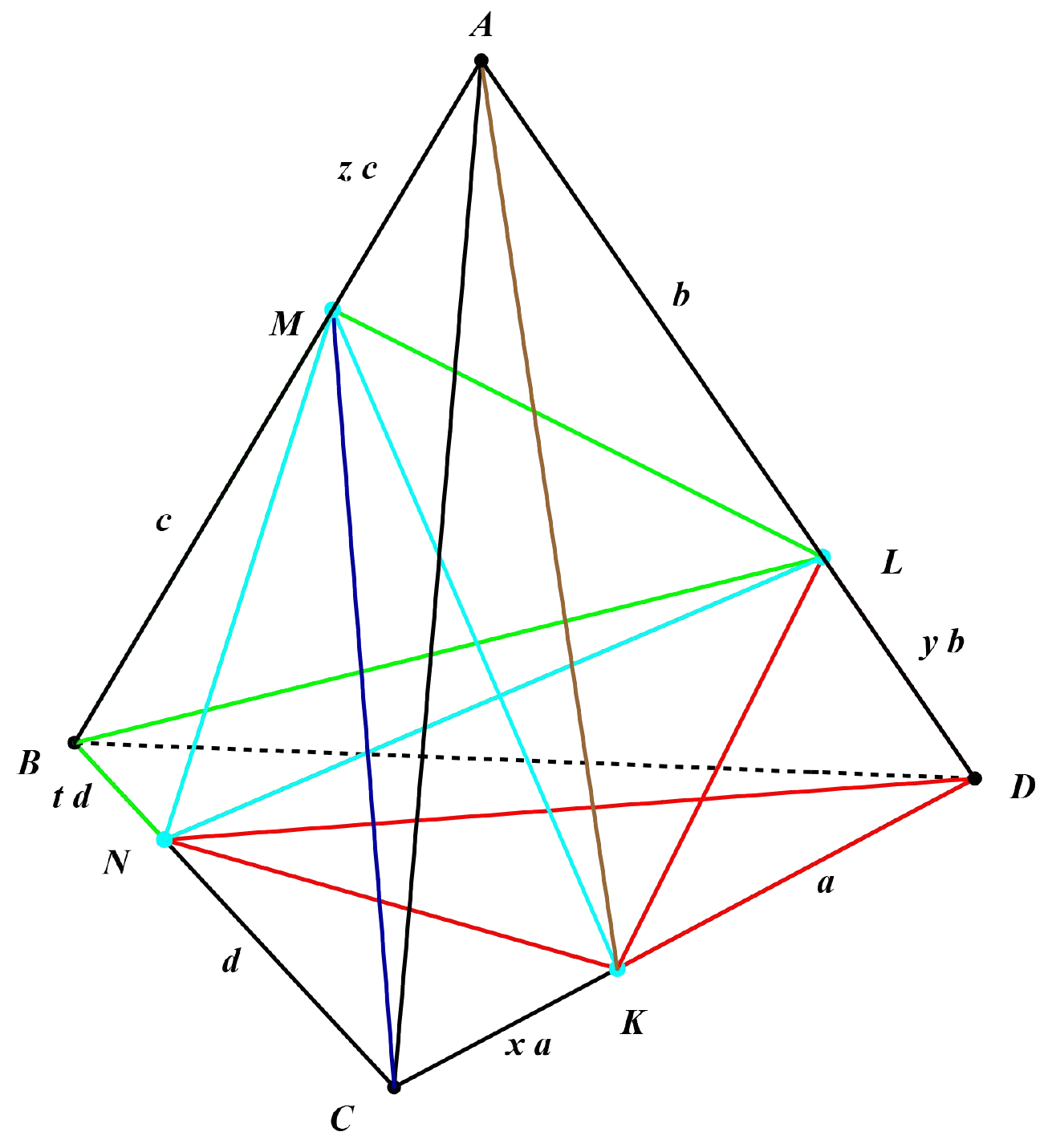}
\caption{Notation $(xyzt<1)$}
\label{fig4}
\end{figure}

The following lemma will be used repetitively, often without explicit reference.

\begin{lm}\label{lm1}
In the notation of Figure \ref{fig2}, we have
$$
V_{AMCD}=V_{ABCD}\frac{|AM|}{|AB|}.
$$
\end{lm}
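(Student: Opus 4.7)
The plan is to exploit the fact that the tetrahedra $AMCD$ and $ABCD$ share the face $ACD$, so their volumes are in the ratio of the heights dropped from $M$ and $B$ respectively onto the plane $ACD$. I would write each volume using the standard formula $V = \tfrac{1}{3}(\text{area of base}) \cdot (\text{height})$ with the common base being the triangle $ACD$.

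Next I would argue that the ratio of these heights equals $|AM|/|AB|$. Since $A$ lies in the plane $ACD$, the perpendicular distance from a point on the line $AB$ to this plane varies linearly along the segment, vanishing at $A$. By similar triangles (drop perpendiculars from $M$ and $B$ onto the plane $ACD$ and connect them by a line through $A$), the distance from $M$ equals $\tfrac{|AM|}{|AB|}$ times the distance from $B$. Combining this with the common-base formula gives the claimed identity $V_{AMCD}=V_{ABCD}\cdot \tfrac{|AM|}{|AB|}$.

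There is essentially no obstacle here: the only thing to be slightly careful about is that $M$ must lie on the segment $AB$ (not on its extension), which is assumed, so the ratio $|AM|/|AB|$ is in $[0,1]$ and the heights have the same sign. This same "common base, proportional heights" argument will presumably be reused throughout the paper, which is why the authors record it once as Lemma \ref{lm1}.
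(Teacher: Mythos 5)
Your argument is correct and is exactly the standard common-base, proportional-heights reasoning that the paper implicitly relies on (the authors state Lemma \ref{lm1} without proof, treating it as elementary). Nothing further is needed.
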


We will also utilize the following consequence of Lemma \ref{lm1} on a number of occasions.

\begin{lm}\label{l15} In the notation of Figure \ref{fig5}, 
\begin{figure}[ht]\centering
\includegraphics[height=2in]{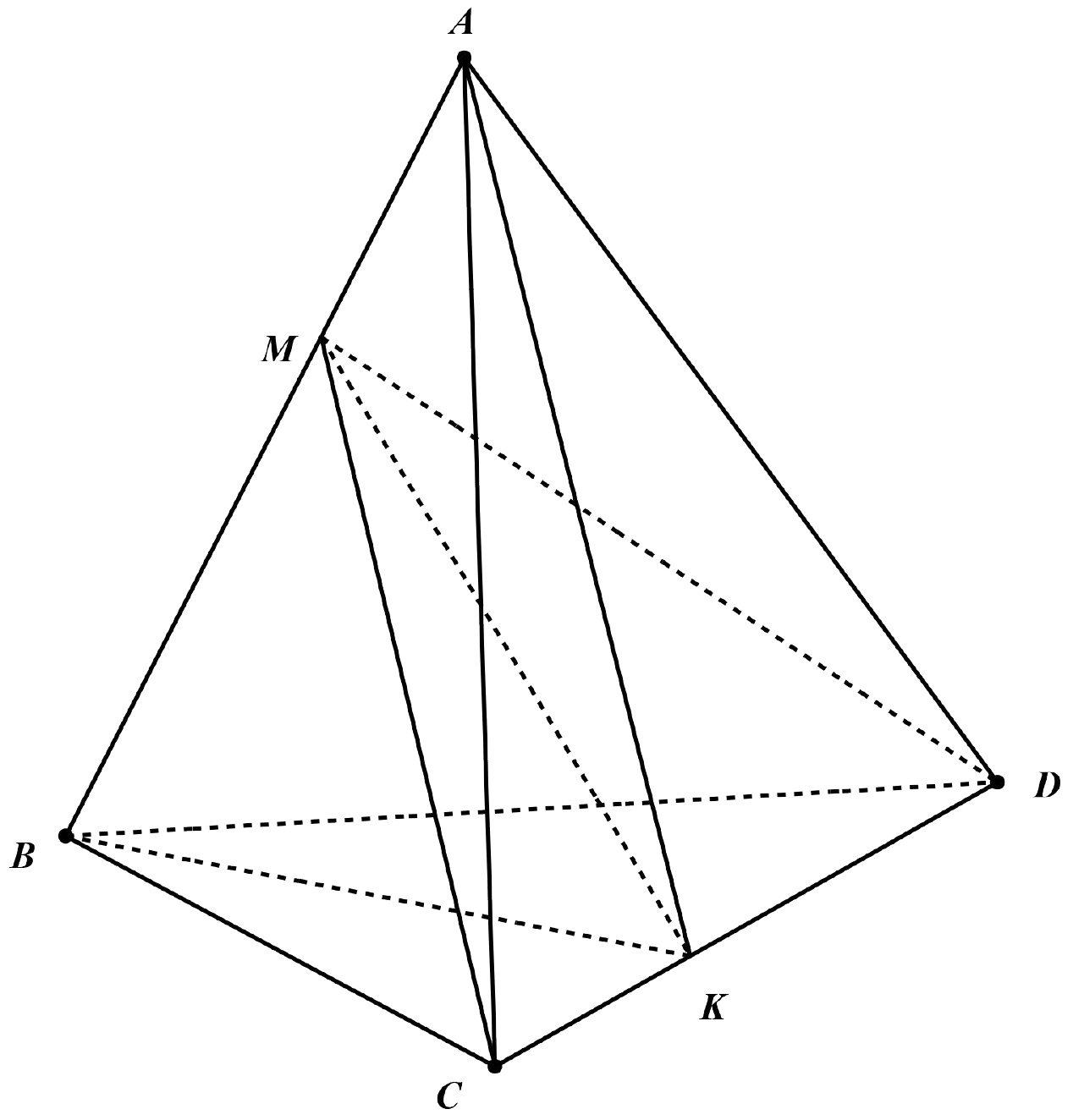}
\caption{Tetrahedra ACKM and ADKM}
\label{fig5}
\end{figure}
we have 
\[V_{AKCM}=V_{ABCD}\frac {|AM|}{|AB|}\frac{|CK|}{|CD|} \text{ \ and \ } 
V_{AMKD}=V_{ABCD}\frac{|AM|}{|AB|}\frac{|DK|}{|DC|}. \]
\end{lm}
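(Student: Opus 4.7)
The plan is to obtain both formulas by applying Lemma \ref{lm1} twice: once to cut the original tetrahedron along the plane through $C, D, M$, and then once more to cut the resulting sub-tetrahedron along a plane through $K$. Although Lemma \ref{lm1} is stated in the specific notation of Figure \ref{fig2}, its content is a general scaling principle for tetrahedra: replacing a vertex by a point on an incident edge scales the volume by the corresponding length ratio, because the two tetrahedra share the opposite face and have heights in the same ratio as the two sub-segments. I will invoke it in this coordinate-free form, keeping track of which pair of vertices plays the role of $A, B$ at each step.

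For the first formula, I would first apply Lemma \ref{lm1} to $ABCD$ with $M$ on $AB$ to get
\[
V_{AMCD}=V_{ABCD}\frac{|AM|}{|AB|}.
\]
Then I would apply Lemma \ref{lm1} to the tetrahedron $AMCD$, this time with the edge $CD$ playing the role of $AB$ and $K$ playing the role of $M$, to obtain
\[
V_{AMCK}=V_{AMCD}\frac{|CK|}{|CD|}.
\]
Since the volume of a tetrahedron depends only on the set of its vertices, $V_{AKCM}=V_{AMCK}$, and chaining the two identities gives the first formula.

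For the second formula, I would proceed identically except in the second step I would take the complementary sub-tetrahedron of $AMCD$ on the segment $DK\subset DC$, giving $V_{AMKD}=V_{AMCD}\,\frac{|DK|}{|DC|}$, which combined with the first step yields the claimed expression. As an internal consistency check, the two sub-tetrahedra partition $AMCD$ along the plane through $A, M, K$, and indeed $V_{AMCK}+V_{AMKD}=V_{AMCD}$.

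There is no real obstacle in this proof; the only thing requiring care is the bookkeeping of which vertices play the roles of $A, B, C, D$ in each application of Lemma \ref{lm1}. The underlying geometric fact being used twice is the same elementary height-times-base scaling argument that justifies Lemma \ref{lm1} itself.
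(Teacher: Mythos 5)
Your proof is correct and matches the paper's intent exactly: the paper states this lemma as a direct consequence of Lemma \ref{lm1}, obtained by applying that scaling principle twice (first to $M$ on $AB$, then to $K$ on $CD$), which is precisely your argument. Your bookkeeping of which vertices play which roles, and the consistency check $V_{AMCK}+V_{AMKD}=V_{AMCD}$, are both sound.
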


Referring to Figure \ref{fig4}, the tetrahedron $ABCD$ contains seven smaller
tetrahedra $KLMN$, $AKLM$, $BLMN$, $CKMN$, $DKLN$, $ACKM$, and $BDLN$. 

The critical observation that
\begin{equation}\label{e0} 
V_{ABCD}=V_{KLMN}+V_{AKLM}+V_{BLMN}+V_{CKMN}+V_{DKLN}+V_{ACKM}+V_{BDLN}
\end{equation}
can be seen from Figures \ref{fig6} and \ref{fig7}.

\begin{figure}[ht]
\centering
\begin{minipage}[b]{0.48\linewidth}
\includegraphics[height=2.5in]{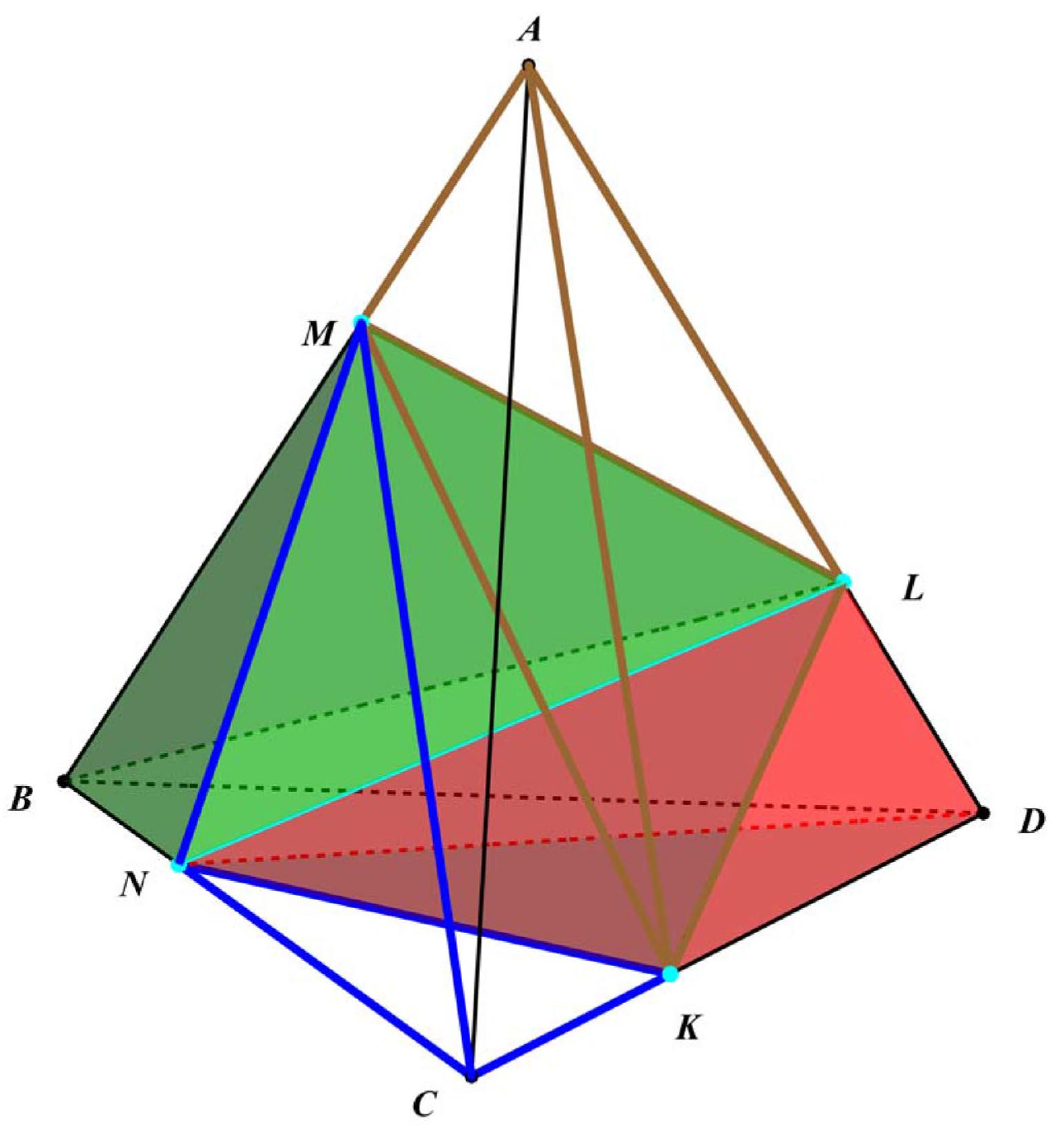}
\caption{Tetrahedra BLMN and DKLN}
\label{fig6}
\end{minipage}
\quad
\begin{minipage}[b]{0.48\linewidth}
\includegraphics[height=2.5in]{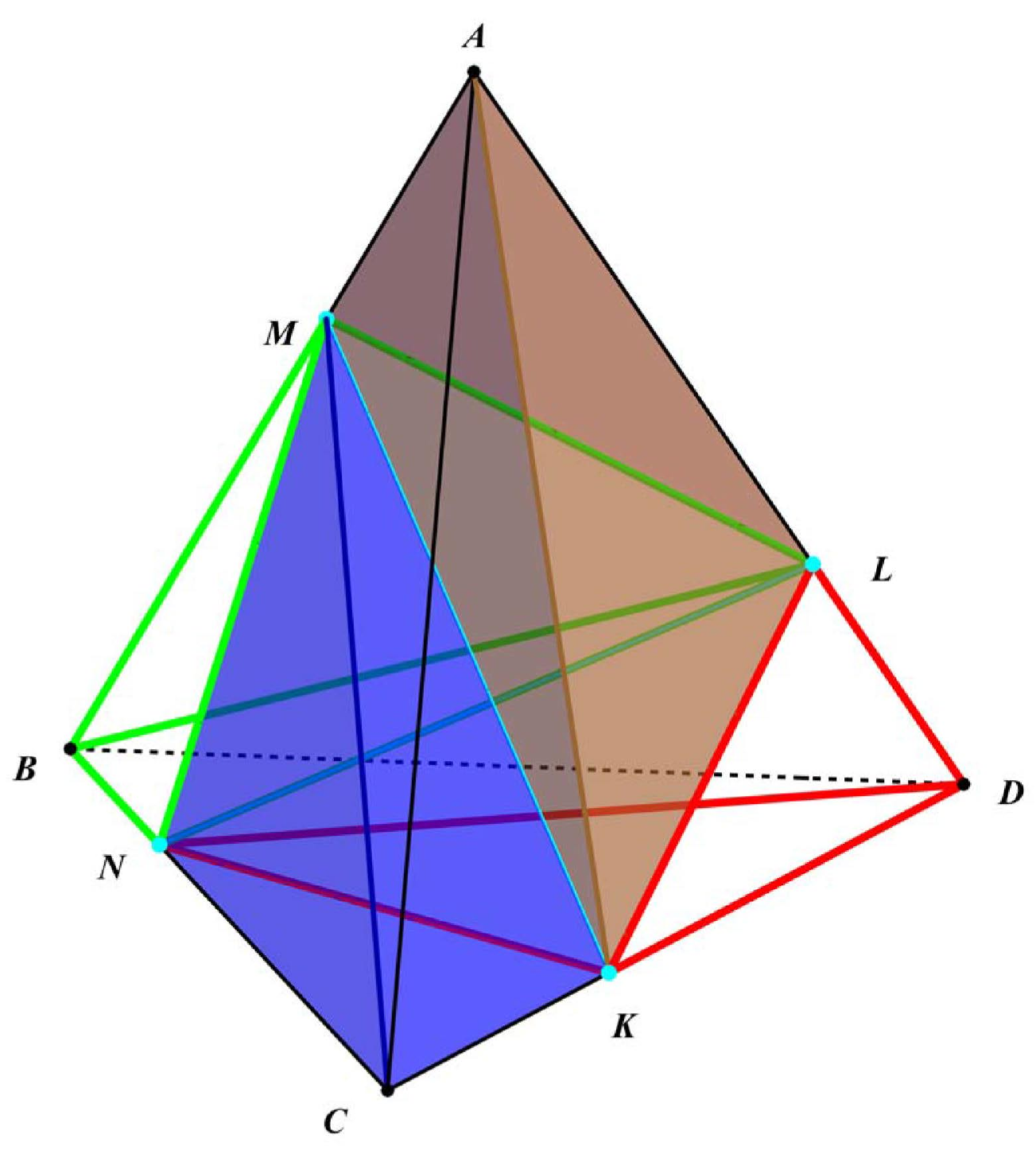}
\caption{Tetrahedra AKLM and CKMN}
\label{fig7}
\end{minipage}
\end{figure}


By Lemma \ref{l15}, we have
$$
V_{ACKM}=V_{ABCD}\frac{|AM|}{|AB|}\frac{|CK|}{|CD|}=\frac z{1+z}\frac x{1+x},
$$
$$
V_{BDLN}=V_{ABCD}\frac{|BN|}{|BC|}\frac{|DL|}{|DA|}=\frac t{1+t} \frac y{1+y}.
$$
Applying Lemma \ref{l15} in the second step of each of the following four calculations, we obtain
$$
V_{AKLM}=V_{AKDM}\frac{|AL|}{|AD|}=V_{ABCD}\frac{|AM|}{|AB|}\frac{|DK|}{|DC|}\frac{|AL|}{|AD|}=\frac z{1+z}\frac 1{1+x}\frac 1{1+y},
$$
$$
V_{BLMN}=V_{BALN}\frac{|BM|}{|BA|}=V_{ABCD}\frac{|AL|}{|AD|}\frac{|BN|}{|BC|}\frac{|BM|}{|BA|}=\frac 1{1+y}\frac t{1+t}\frac 1{1+z},
$$
$$
V_{CKMN}=V_{CKMB}\frac{|CN|}{|CB|}=V_{ABCD}\frac{|BM|}{|BA|}\frac{|CK|}{|CD|}\frac{|CN|}{|CB|}=\frac 1{1+z}\frac x{1+x}\frac 1{1+t},
$$
$$
V_{DKLN}=V_{DCLN}\frac{|DK|}{|DC|}=V_{ABCD}\frac{|CN|}{|CB|}\frac{|DL|}{|DA|}\frac{|DK|}{|DC|}=\frac 1{1+t}\frac y{1+y}\frac 1{1+x}.
$$

Therefore equation (\ref{e0}) yields
\[\begin{aligned}V_{KLMN}&=1-\frac x{(1+x)(1+z)(1+t)}-\frac y{(1+x)(1+y)(1+t)}-\frac z{(1+x)(1+y)(1+z)}\\
&-\frac t{(1+y)(1+z)(1+t)}-\frac {xz}{(1+x)(1+z)}-\frac {yt}{(1+y)(1+t)}\\
&= \frac{1-xyzt}{(1+x)(1+y)(1+z)(1+t)}.\end{aligned}\]

\vskip 5pt
Assume now that $xyzt>1$. In order to reduce this case to the case $xyzt<1$, we can put the 
tetrahedron $ABCD$ into a different perspective, namely, let us place the vertex $D$ at the original position of the vertex $A$
and run the edge $DB$ along the original edge $AC$, in the same direction. If we now consider the cycle $(DCBA)$ instead of the 
original cycle $(ABCD)$, the parameters $x,y,z,t$ switch to $\frac 1z,\frac 1y, \frac 1x, \frac 1t$, respectively. Since  
$\frac 1z\frac 1y\frac 1x\frac 1t<1$, we can apply the previous formula which yields
$$
V_{KLMN}=\frac{xyzt-1}{(1+x)(1+y)(1+z)(1+t)}.
$$
\qed

\section{Proof of (\ref{e-1})}

Now we proceed to compute the volume of the smaller tetrahedron $PQRS$, 
carved up inside the original tetrahedron $ABCD$ by the four cutting planes described earlier.
Assuming again $xyzt<1$ and using the notation of Figure \ref{fig8}, 
\begin{figure}[ht]\centering
\includegraphics[height=3.5in]{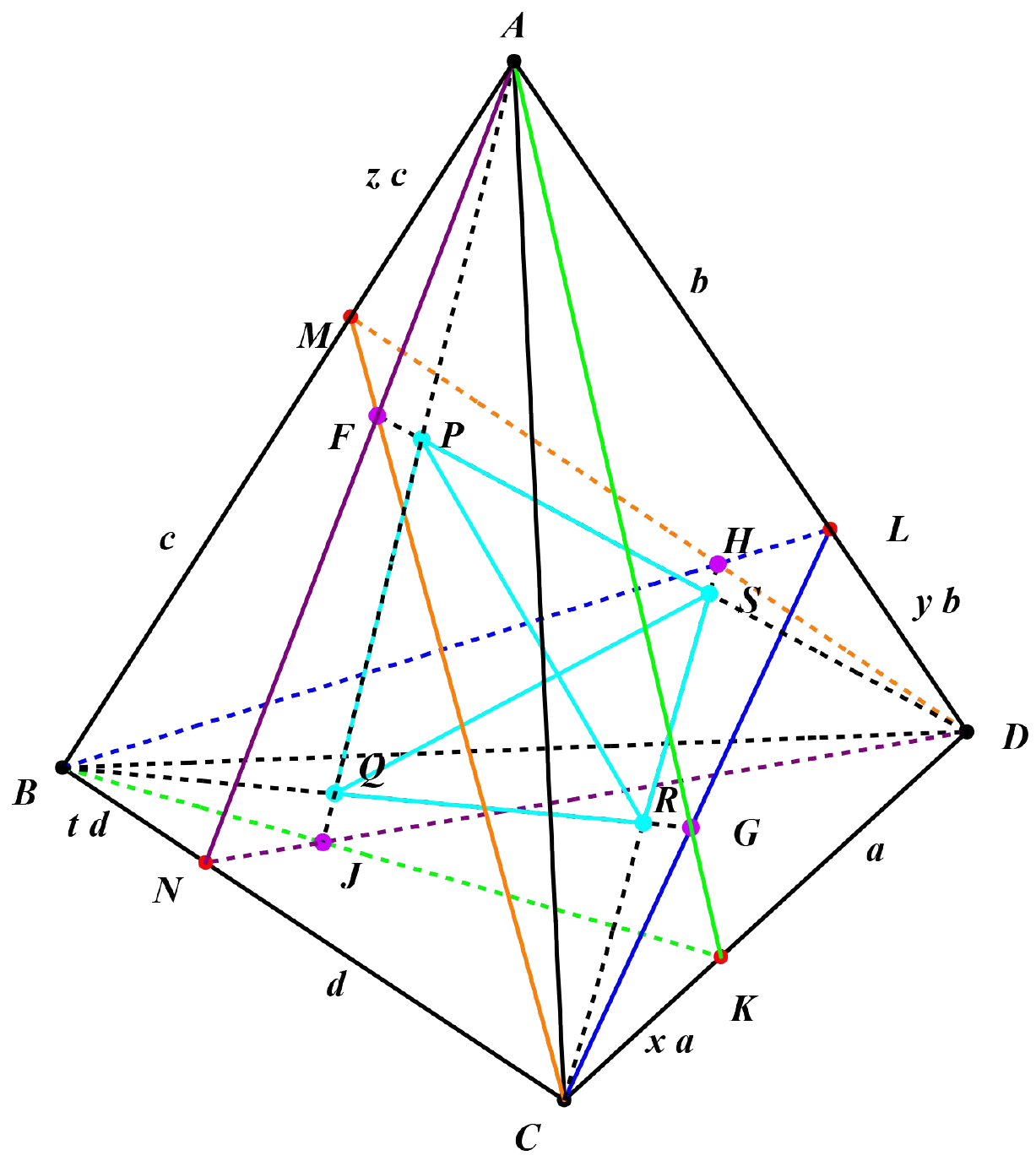}
\caption{Notation $(xyzt<1)$}
\label{fig8}
\end{figure}
we will consider seven smaller tetrahedra $APKD$, $BQLA$, $CRMB$, $DSNC$, $AMCK$, $BNDL$, and $PQRS$.

The critical observation that 
\begin{equation}\label{e1} V_{ABCD}=V_{APKD}+V_{BQLA}+V_{CRMB}+V_{DSNC}+V_{AMCK}+V_{BNDL}+V_{PQRS}\end{equation}
can be seen from Figures \ref{fig9} and \ref{fig10}.

\begin{figure}[ht]
\centering
\begin{minipage}[b]{0.4743\linewidth}
\includegraphics[height=2.5in]{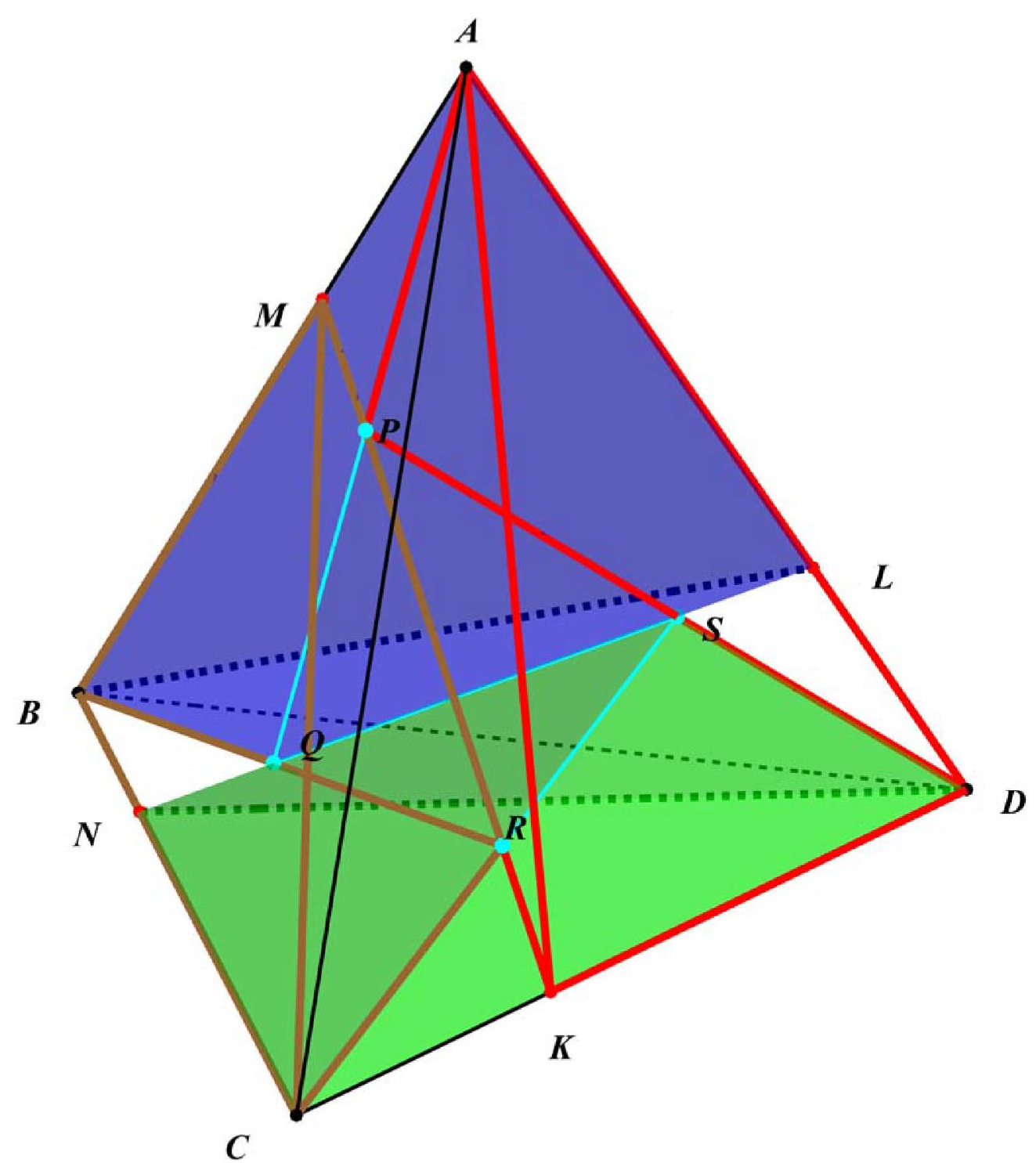}
\caption{Tetrahedra BQLA and DSNC}
\label{fig9}
\end{minipage}
\quad
\begin{minipage}[b]{0.4897\linewidth}
\includegraphics[height=2.5in]{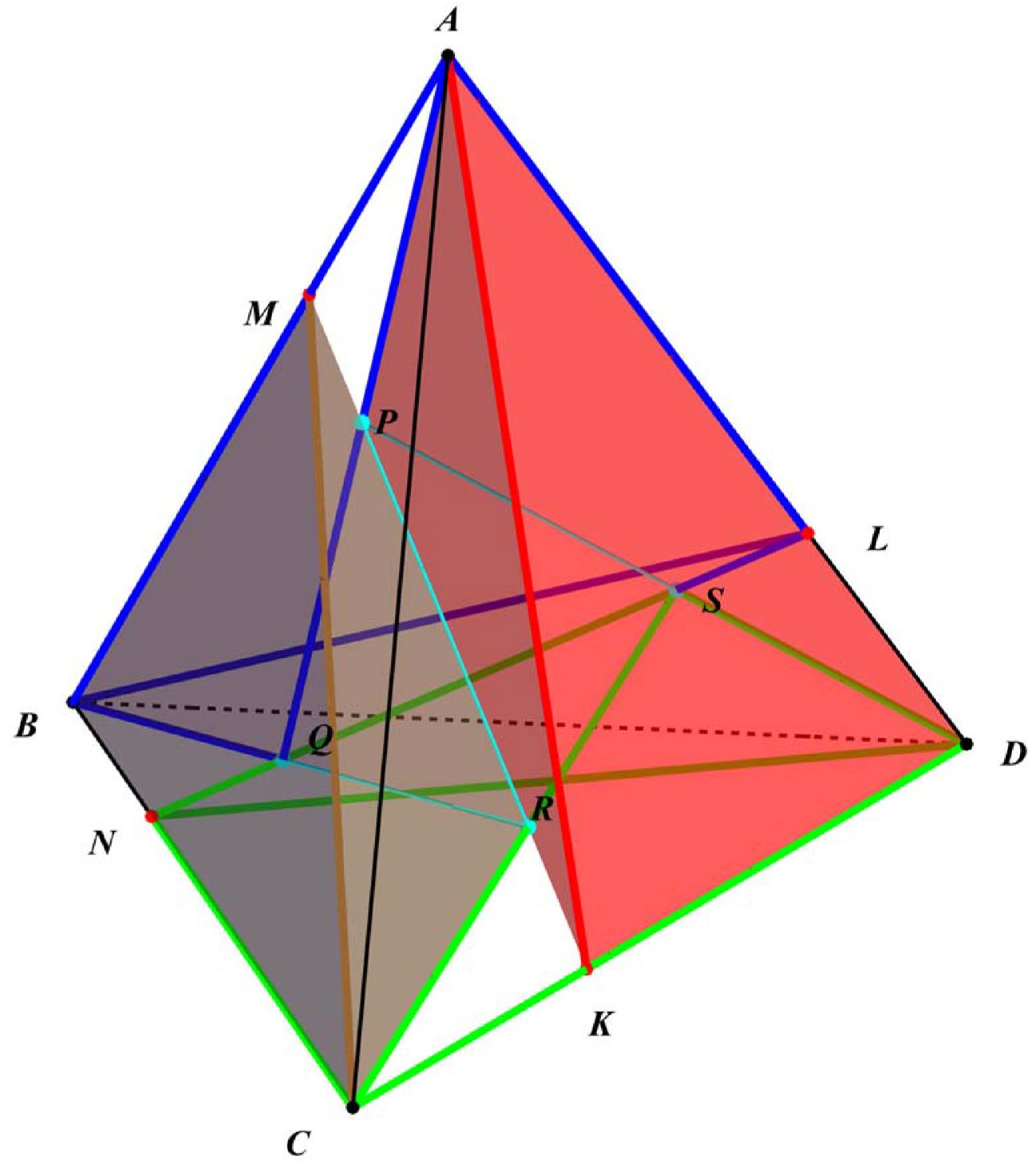}
\caption{Tetrahedra APKD and CRMB}
\label{fig10}
\end{minipage}
\end{figure}


Using Lemma \ref{l15}, we compute 

\[V_{AMCK}=V_{ABCD}\frac{|AM|}{|AB|}\frac{|CK|}{|CD|}=
\frac{z}{1+z}\frac{x}{1+x}\] and 
\[V_{BNDL}=V_{ABCD}\frac{|DL|}{|DA|}\frac{|BN|}{|BC|}=\frac{y}{1+y}\frac{t}{1+t}.\]

\vskip 5pt
Let us now calculate the volumes of the four remaining tetrahedra in the right-hand side of (\ref{e1}).

\begin{lm}\label{lm3}
Consider the triangle $ABC$ as in Figure \ref{fig11}. If $\frac{|AM|}{|MB|}=v$ and $\frac{|BK|}{|KC|}=u$, then
\begin{figure}[ht]\centering
\includegraphics[height=1.5in]{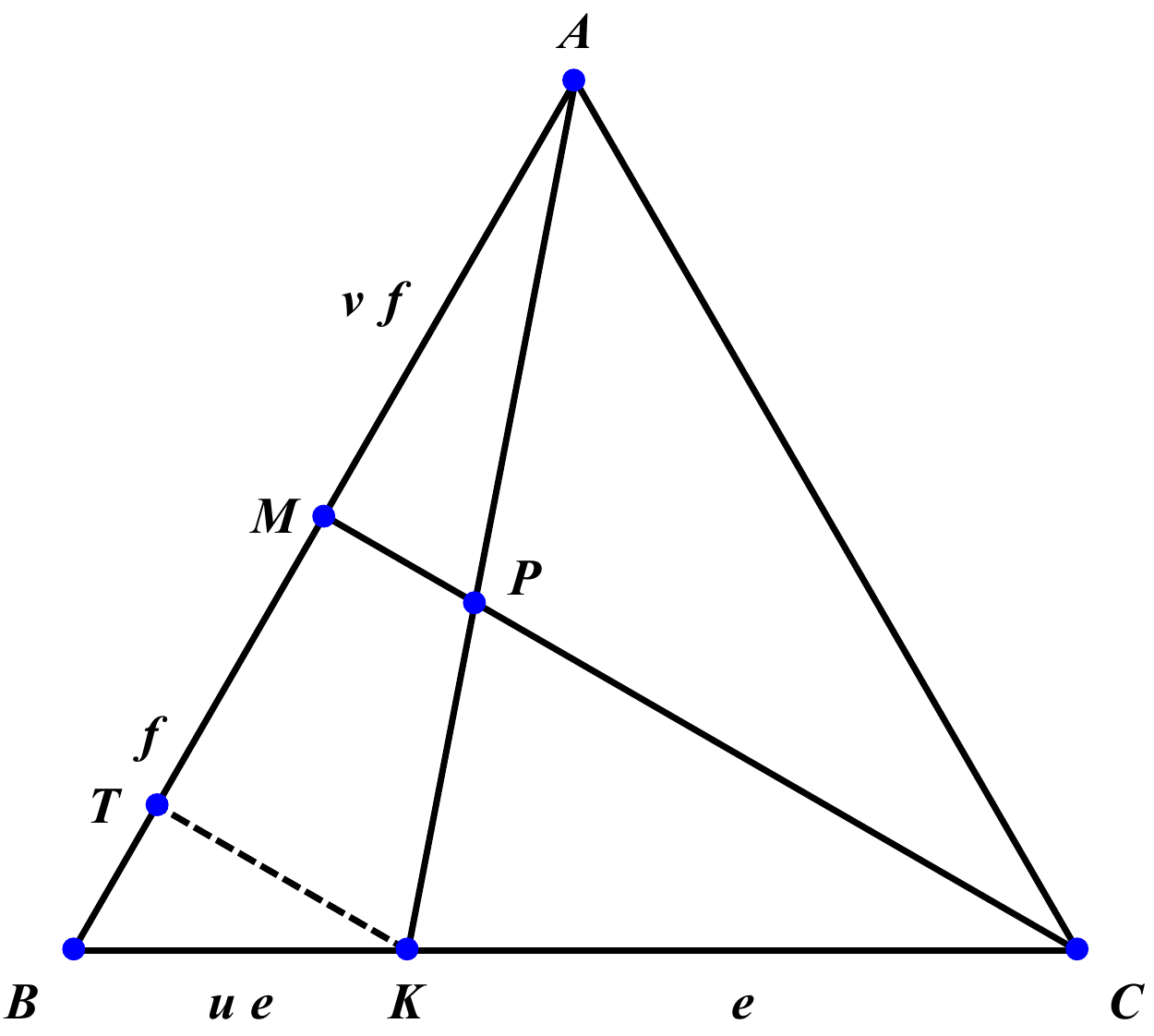}
\caption{Ratio $\frac{|AP|}{|PK|}$}
\label{fig11}
\end{figure}
\[\frac{|AP|}{|PK|}=v(1+u).\]
\end{lm}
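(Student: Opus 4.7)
The plan is to prove the lemma with a single application of Menelaus' theorem. By construction, $P$ is the intersection of the cevians $AK$ and $CM$ of triangle $ABC$, so the line through $C$, $P$, and $M$ is a transversal of triangle $ABK$: it meets side $AB$ at $M$, side $AK$ at $P$, and the extension of side $BK$ (which lies on line $BC$) at $C$.

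The key steps are then: first, write Menelaus' identity
\[
\frac{|AM|}{|MB|}\cdot\frac{|BC|}{|CK|}\cdot\frac{|KP|}{|PA|}=1
\]
for the triangle $ABK$ with transversal $CPM$; second, substitute $|AM|/|MB|=v$ and $|BC|/|CK|=(|BK|+|KC|)/|KC|=1+u$; third, solve for $|AP|/|PK|$, the reciprocal of the third factor, to obtain $v(1+u)$.

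A proof more in the spirit of the paper's area/volume-ratio techniques would instead use that $|AP|/|PK|=[AMC]/[KMC]$ (the two triangles share the base $MC$, and $P=AK\cap MC$), together with the planar analogue of Lemma \ref{lm1} to express $[AMC]$ as $\frac{v}{1+v}[ABC]$ and $[KMC]$ as $\frac{1}{(1+v)(1+u)}[ABC]$; the quotient gives the desired ratio directly. A third option, which also works cleanly, is mass points: place mass $1$ at $B$ and $u$ at $C$ so that $K$ balances, and mass $1/v$ at $A$ so that $M$ balances, whence the fulcrum on $AK$ satisfies $|AP|/|PK|=(1+u)/(1/v)=v(1+u)$.

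I do not anticipate any real obstacle. The only small care point is choosing the correct triangle and transversal for Menelaus (in particular, realizing that $C$ must be taken on the \emph{extension} of side $BK$ of triangle $ABK$), or, equivalently in the area-ratio version, keeping the two base ratios $|AM|/|AB|$ and $|KC|/|BC|$ in the correct orientation.
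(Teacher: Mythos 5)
Your Menelaus argument is correct, but it is a genuinely different route from the paper's. The paper proves the lemma by a parallel-line (intercept theorem) construction: it draws $KT\parallel CM$ with $T$ on $AB$, notes that in triangle $BMC$ this gives $|MT|=\frac{|MB|}{1+u}$, and then, since $MP\parallel KT$, concludes $\frac{|AP|}{|PK|}=\frac{|AM|}{|MT|}=v(1+u)$. Your primary route instead applies Menelaus to triangle $ABK$ with transversal $C$--$P$--$M$, giving
\[
\frac{|AM|}{|MB|}\cdot\frac{|BC|}{|CK|}\cdot\frac{|KP|}{|PA|}=1,
\]
and the substitutions you list do yield $\frac{|AP|}{|PK|}=v(1+u)$; you correctly identify the one delicate point, namely that $C$ lies on the extension of side $BK$. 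What the paper's version buys is self-containment: it uses only similar triangles, which fits the elementary, synthetic flavor of the article and avoids invoking the planar Menelaus theorem as a black box inside a paper whose subject is precisely generalizations of Menelaus and Ceva (not circular, but stylistically avoided). What your version buys is brevity and uniformity -- a single standard identity with no auxiliary point $T$ -- and your second alternative, the area-ratio computation $\frac{|AP|}{|PK|}=\frac{[AMC]}{[KMC]}$ with $[AMC]=\frac{v}{1+v}[ABC]$ and $[KMC]=\frac{1}{(1+u)(1+v)}[ABC]$, is in fact the closest in spirit to the volume-ratio machinery (Lemmas \ref{lm1} and \ref{l15}) used everywhere else in the paper. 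All three of your arguments are sound.
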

\begin{proof}
If $KT \parallel CM$, then $|MT|=\frac{f}{1+u}$. Therefore 
\[\frac{|AP|}{|PK|}=\frac{|AM|}{|MT|}=v(1+u).\]
\end{proof} 

Applying Lemma \ref{lm1} several times, we obtain
\[\begin{aligned}V_{APKD}=&V_{APCD}\frac{|DK|}{|DC|}=V_{AFCD}\frac{|DP|}{|DF|}\frac{|DK|}{|DC|}=
V_{AMCD}\frac{|CF|}{|CM|}\frac{|DP|}{|DF|}\frac{|DK|}{|DC|}\\
=&V_{ABCD}\frac{|AM|}{|AB|}\frac{|CF|}{|CM|}\frac{|DP|}{|DF|}\frac{|DK|}{|DC|}.
\end{aligned}\]
It is immediate that the ratios 
\[\frac{|AM|}{|AB|}=\frac{z}{1+z} \text{ \ and \ } \frac{|DK|}{|DC|}=\frac{1}{1+x}.\]

\vskip 5pt
Lemma \ref{lm3} applied to the triangle $ABC$ gives
\[\frac{|CF|}{|FM|}=\frac1t\left (1+\frac1z \right)=\frac{1+z}{zt}.\] 
Another application of Lemma \ref{lm3}, now to triangle $CDM$, yields
\[\frac{|DP|}{|PF|}=\frac1x\left (1+\frac{1+z}{zt}\right )=\frac{1+z+zt}{ztx}.\]
Using these we derive
\[\frac{|CF|}{|CM|}=\frac{|CF|}{|CF|+|FM|}=\frac{\frac{|CF|}{|FM|}}{\frac{|CF|}{|FM|}+1}=
\frac{\frac{1+z}{zt}}{1+\frac{1+z}{zt}}=\frac{1+z}{1+z+zt}\] and 
\[\frac{|DP|}{|DF|}=\frac{|DP|}{|DP|+|PF|}=\frac{\frac{|DP|}{|PF|}}{\frac{|DP|}{|PF|}+1}=
\frac{\frac{1+z+zt}{ztx}}{1+\frac{1+z+zt}{ztx}}=\frac{1+z+zt}{1+z+zt+ztx}.\]
Therefore
\[V_{APKD}=\frac{z}{1+z}\frac{1+z}{1+t+zt}\frac{1+z+zt}{1+z+zt+ztx}=\frac{z}{(1+x)(1+z+zt+ztx)}.\]
Similarly, 
\[\begin{aligned}V_{BQLA}=&V_{BQDA}\frac{|AL|}{|AD|}=V_{BJDA}\frac{|AQ|}{|AJ|}\frac{|AL|}{|AD|}
=V_{BNDA}\frac{|DJ|}{|DN|}\frac{|AQ|}{|AJ|}\frac{|AL|}{|AD|}\\
=&V_{ABCD}\frac{|BN|}{|BC|}\frac{|DJ|}{|DN|}\frac{|AQ|}{|AJ|}\frac{|AL|}{|AD|}
\end{aligned}\]
and 
\[\frac{|BN|}{|BC|}=\frac{t}{1+t}, \quad \frac{|AL|}{|AD|}=\frac{1}{1+y}.\]

\vskip 5pt
Lemma \ref{lm3} applied to the triangle $BCD$ gives
\[\frac{|DJ|}{|JN|}=\frac1x\left (1+\frac1t \right )=\frac{1+t}{xt},\] hence \[\frac{|DJ|}{|DN|}=\frac{1+t}{1+t+xt}.\]
Lemma \ref{lm3} applied to the triangle $AND$ yields \[\frac{|AQ|}{|QJ|}
=\frac1y\left (1+\frac{1+t}{xt}\right )=\frac{1+t+xt}{xyt}\] and therefore \[\frac{|AQ|}{|AJ|}=\frac{1+t+xt}{1+t+xt+xyt}.\]
Thus 
\[V_{BQLA}=\frac{t}{1+t}\frac{1+t}{1+t+xt}\frac{1+t+xt}{1+t+xt+xyt}=\frac{t}{(1+y)(1+t+tx+txy)}.\]
Proceeding as before, we have
\[\begin{aligned}V_{CRMB}=&V_{CRAB}\frac{|BM|}{|BA|}=V_{CGAB}\frac{|BR|}{|BG|}\frac{|BM|}{|BA|}
=V_{CLAB}\frac{|CG|}{|CL|}\frac{|BR|}{|BG|}\frac{|BM|}{|BA|}\\
=&V_{ABCD}\frac{|AL|}{|AD|}\frac{|CG|}{|CL|}\frac{|BR|}{|BG|}\frac{|BM|}{|BA|},
\end{aligned}\]
where 
\[\frac{|AL|}{|AD|}=\frac{1}{1+y}, \quad \frac{|BM|}{|BA|}=\frac{1}{1+z}.\]

\vskip 5pt
Lemma \ref{lm3} applied to the triangle $ACD$ implies
\[\frac{|CG|}{|GL|}=x(1+y),\] and this gives \[\frac{|CG|}{|CL|}=\frac{x(1+y)}{1+x+xy}.\]
Also, Lemma \ref{lm3} applied to the triangle $ABD$ gives 
\[\frac{|BH|}{|HL|}=\frac1z\left (1+\frac1y \right )=\frac{1+y}{yz}.\] 
Now, applied to the triangle $ABD$, Lemma \ref{lm3} yields  
\[\frac{|BR|}{|RG|}=\frac{1+y}{yz}\left (1+\frac{1}{x(1+y)}\right )=\frac{1+x+xy}{xyz},\] and we derive
\[\frac{|BR|}{|BG|}=\frac{1+x+xy}{1+x+xy+xyz}.\]
Therefore
\[V_{CRMB}=\frac{1}{1+y}\frac{x(1+y)}{1+x+xy}\frac{1+x+xy}{1+x+xy+xyz}=\frac{x}{(1+z)(1+x+xy+xyz)}.\]
Finally, 
\[\begin{aligned}V_{DSNC}=&V_{DSBC}\frac{|NC|}{|BC|}=V_{DHBC}\frac{|CS|}{|CH|}\frac{|NC|}{|BC|}=V_{DLBC}\frac{|BH|}{|BL|}\frac{|CS|}{|CH|}\frac{|NC|}{|BC|}\\
=&V_{ABCD}\frac{|DL|}{|DA|}\frac{|BH|}{|BL|}\frac{|CS|}{|CH|}\frac{|NC|}{|BC|}
\end{aligned}\]
and 
\[\frac{|DL|}{|DA|}=\frac{y}{1+y}, \quad \frac{|NC|}{|BC|}=\frac{1}{1+t}.\]

\vskip 5pt
It follows from Lemma \ref{lm3} for the triangle $ABD$ that 
\[\frac{|BH|}{|HL|}=\frac1z\left (1+\frac1y \right )=\frac{1+y}{yz},\] so
\[\frac{|BH|}{|BL|}=\frac{1+y}{1+y+yz}.\]
An application of Lemma \ref{lm3} to the triangle $BCL$ results in
\[\frac{|CS|}{|SH|}=\frac1t\left (1+\frac{1+y}{yz}\right )=\frac{1+y+yz}{yzt}\] and 
\[\frac{|CS|}{|CH|}=\frac{1+y+yz}{1+y+yz+yzt},\] and we obtain
\[V_{DSNC}=\frac{y}{1+y}\frac{1+y}{1+y+yz}\frac{1+y+yz}{1+y+yz+yzt}=\frac{y}{(1+t)(1+y+yz+yzt)}.\]

Now equation (\ref{e1}) yields
\[\begin{aligned}V_{PQRS}=&1-\frac{xz}{(1+x)(1+z)}-\frac{yt}{(1+y)(1+t)}-\frac{x}{(1+z)(1+x+xy+xyz)} \\
&-\frac{y}{(1+t)(1+y+yz+yzt)} -\frac{z}{(1+x)(1+z+zt+ztx)}\\&-\frac{t}{(1+y)(1+t+tx+txy)}.
\end{aligned}\]
It can be verified directly or with the help of computer that the last expression equals
\[\frac{(1-xyzt)^3}{(1+x+xy+xyz)(1+y+yz+yzt)(1+z+zt+ztx)(1+t+tx+txy)}.\]

\vskip 5pt
If $xyzt>1$, then we can perform the same substitution of the parameters  $x,y,z,t$ as in the previous section to see that

\[V_{PQRS}= \frac{(xyzt-1)^3}{(1+x+xy+xyz)(1+y+yz+yzt)(1+z+zt+ztx)(1+t+tx+txy)}.\]
\qed

{\bf Acknowledgement.} The authors are endebted to Professor Jose Alfredo Jimenez for his encouragement and help with the images which the present article heavily relies on.

\end{document}